\documentclass{amsart}
\usepackage[utf8]{inputenc}

\usepackage{hyperref}

\usepackage{amsmath}
\usepackage{amssymb}
\usepackage{tikz-cd}
\usepackage{enumitem}
\usepackage{bbm}
\usepackage{todonotes}
\usepackage{ifthen}

\theoremstyle{plain}
\newtheorem{thm}{Theorem}[section]
\newtheorem{lemma}[thm]{Lemma}
\newtheorem{prop}[thm]{Proposition}

\newtheorem*{remark}{Remark}
\theoremstyle{definition}
\newtheorem{definition}[thm]{Definition}

\title{Transcendence degrees over mutually generic extensions}

\author{Jonathan Schilhan}
\address{University of Vienna\\
Institute of Mathematics\\
Kurt Gödel Research Center\\
Kolingasse 14-16\\
1090 Vienna\\
Austria}
\email{jonathan.schilhan@univie.ac.at}

\subjclass[2020]{Primary 03E40}
\keywords{}

\thanks{This research was funded in whole or in part by the Austrian Science Fund (FWF) [10.55776/ESP5711024]. For open access purposes, the authors have applied a CC BY public copyright license to any author-accepted manuscript version arising from this submission. We would like to thank Azul Fatalini for their valuable comments, which improved the presentation of this article.}

\begin{document}

\begin{abstract}
Let $G_0$, \dots, $G_{n-1}$ be mutually generic over $V$, each $G_i$ adding at least one new real over $V$. We show that the transcendence degree of the reals of $V[G_0, \dots, G_{n-1}]$ is maximal (of size continuum) over the field generated by reals coming from models $V[ G_i : i \in a]$, for a proper subset $a$ of $n$. This answers a question of Fatalini and Schindler.
\end{abstract}

\maketitle

\section{Introduction}

The purpose of this note is to answer the main open question posed by Azul Fatalini and Ralf Schindler in \cite{Fatalini2025}. We assume that the reader is familiar with forcing and some basic algebra and analysis. We also make occasional use of descriptive set theory and basic absoluteness theorems for Polish spaces and Borel functions.

Our main result reads as follows:

\begin{thm}\label{thm:main}
    Let $\mathbb{P}_0$, \dots, $\mathbb{P}_{n-1}$ be forcing notions adding new reals and let $G = G_0 \times \dots \times G_{n-1}$ be $\mathbb{P}_0 \times \dots \times \mathbb{P}_{n-1}$-generic over $V$. Then, in $V[G]$, $\mathbb{R}$ has transcendence degree $\mathfrak{c}$ over the field generated by $\bigcup_{a \subsetneq n} \mathbb{R}^{V[G_i : i \in a]}$.
\end{thm}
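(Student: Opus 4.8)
The goal is: in $V[G]$, there are continuum-many reals that are algebraically independent over the field $K$ generated by $\bigcup_{a\subsetneq n}\mathbb{R}^{V[G_i:i\in a]}$. The natural strategy is to use the new reals added by the "full" product to build such a transcendence set, while exploiting *mutual genericity* to control algebraic dependencies.

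First I would reduce to a genericity/Baire-category statement about a single new real. Pick in each $V[G_i]$ a new real $r_i$ (exists since $\mathbb{P}_i$ adds a new real). The key claim I would aim for is: for any finitely many reals $x_0,\dots,x_{k-1}$ lying in the field $K$ (equivalently: definable from finitely many reals in proper sub-extensions $V[G_i:i\in a]$, $a\subsetneq n$), there is a real $z\in V[G]$ — produced by a suitably generic "diagonal" real over the product — that is transcendental over $\mathbb{Q}(x_0,\dots,x_{k-1})$. Then one iterates this $\mathfrak{c}$ many times. The real $z$ should be chosen so that it "sees all coordinates": e.g. $z$ could code the pair $(r_0,\dots,r_{n-1})$, or better, $z$ is a real added by a name that genuinely depends on every $G_i$. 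The crucial point is that a polynomial relation $p(z,x_0,\dots,x_{k-1})=0$ with the $x_j$'s coming only from proper sub-extensions would, by a mutual-genericity / product-forcing argument, have to be decided in a way that is impossible — because the sub-extension $V[G_i:i\in a]$ cannot predict the behavior of $z$, which depends on the missing coordinate(s).

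Concretely, I would set up the forcing-theoretic heart as follows. Work with a condition forcing $p(\dot z, \dot x_0,\dots,\dot x_{k-1}) = 0$ where $\dot z$ names our diagonal real and each $\dot x_j$ is a $\mathbb{P}_0\times\cdots\times\mathbb{P}_{n-1}$-name that factors through a proper sub-product (using that $K$-elements come from proper sub-extensions). Since $p$ has finitely many roots in $z$ for each fixed tuple of coefficients, the value of $z$ would be determined (up to finitely many choices) by the $x_j$'s, hence by the sub-extension reals, hence by $G_i$ for $i$ in a proper subset — say coordinate $i_0$ is omitted. But $\dot z$ genuinely depends on $G_{i_0}$: I would choose $\dot z$ so that changing $G_{i_0}$ below any condition changes $z$ to a value not among the $\leq\deg p$ roots. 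A mutual-genericity argument (two mutually generic copies of $G_{i_0}$ over a common model containing the sub-extension) then yields a contradiction. To handle arbitrary proper subsets $a$, and arbitrary $p$, one needs $\dot z$ to be robustly dependent on *every* coordinate; the descriptive-set-theoretic absoluteness mentioned in the introduction is what lets us run these arguments uniformly for Borel (coded) functions across the mutually generic models.

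**Main obstacle.** The delicate part is the combinatorics of ensuring *simultaneous* transcendence over the whole field $K$ rather than over one finitely generated subfield, and getting $\mathfrak{c}$-many independent reals, not just one. One cannot simply take $z=(r_0,\dots,r_n)$ coded as a single real, because then only *one* such $z$ is available and it would not defeat, e.g., polynomial relations with coefficients that are themselves built from $z$ in another sub-extension — so a careful bookkeeping of *which* reals count as "coming from a proper sub-extension" is needed (the field $K$ is generated by a genuinely larger set than just the $r_i$). I expect the correct move is: fix a single new real $c$ in $V[G]$ that is "generic enough" (e.g. a Cohen or Hechler real over a sub-model that absorbs everything relevant, or a real coding the generic filter on one coordinate over the others), take $\mathfrak{c}$-many reals $\{z_\alpha:\alpha<\mathfrak{c}\}$ mutually Cohen-like over each other built on top of $c$, and show via the product/mutual-genericity argument that no nontrivial polynomial relation among the $z_\alpha$'s with coefficients in $K$ can be forced — because any such relation would have to be read off inside some $V[G_i:i\in a]$, $a\subsetneq n$, which provably cannot compute $c$ and hence cannot compute the $z_\alpha$. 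Making "cannot compute $c$" into a usable statement about *algebraic* (not just set-theoretic) independence, uniformly over all finitely many coefficients ranging over $K$, is where the real work lies, and I anticipate the proof packages this into a lemma of the form: if $w$ is a real not in $V[G_i:i\in a]$ for any $a\subsetneq n$, then $w$ is transcendental over $\mathbb{R}^{\text{(that field)}}$ — proved by the two-generics-over-a-common-model contradiction sketched above — and then amplifies from one such $w$ to $\mathfrak{c}$-many via an iteration or a product of such diagonal reals.
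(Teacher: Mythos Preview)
Your argument has a genuine gap at its core. You write: ``the value of $z$ would be determined (up to finitely many choices) by the $x_j$'s, hence by the sub-extension reals, hence by $G_i$ for $i$ in a proper subset --- say coordinate $i_0$ is omitted.'' But this inference is false. An element of $K$ is a rational function of finitely many reals $x_0,\dots,x_{k-1}$, where each $x_j$ lies in \emph{some} $\mathbb{R}^{V[G_i:i\in a_j]}$ with $a_j\subsetneq n$ --- and the $a_j$'s may be different and may together cover all of $n$. In the case $n=2$, the coefficients may include both a real from $V[G_0]$ and a real from $V[G_1]$; there is no missing coordinate. Your two-generics-over-a-common-model contradiction never gets off the ground, because the ``common model'' would already have to contain both $G_0$ and $G_1$. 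The paper flags exactly this obstruction in the introduction: for mutually Sacks-generic $s_0,s_1$, \emph{every} real of $V[s_0,s_1]$ is computable from a pair $(t_0,t_1)$ with $t_i\in V[s_i]$, so a naive ``the sub-extensions cannot see $z$'' argument cannot succeed without using something specific about algebraic closure.

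The paper's route is quite different from yours on both of the obstacles you identify. First, it does \emph{not} build $\mathfrak{c}$-many independent reals directly: it uses De~Bondt's ``really closure'' trick (Lemma~\ref{lem:really}) to reduce the problem to exhibiting a \emph{single} real outside the really closure of $\bigcup_{a\subsetneq n}\mathbb{R}^{V[G_i:i\in a]}$; this immediately disposes of your bookkeeping worry about iterating to $\mathfrak{c}$. Second, and more importantly, it confronts head-on the fact that an element of the closure has the form $f(\langle u_a: a\subsetneq n\rangle)$ with $u_a\in V[G_i:i\in a]$ for \emph{all} proper $a$ simultaneously (Lemma~\ref{lem:analyticclosure}). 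The replacement for your finite-roots argument is a genuine dimension count: by Sard's lemma (Lemma~\ref{lem:Sard}), the map sending an $m\times\cdots\times m$ grid of inputs $(u_a)$ to the resulting $m^n$-tuple of values of $f$ has measure-zero image once $m$ is large enough that $\sum_{a\subsetneq n}km^{|a|}<m^n$. One then builds a Borel $g\colon\mathbb{R}^n\to\mathbb{R}$ whose values on any such grid avoid these null sets, and a perfect-tree-of-generics argument (the analogue of Proposition~\ref{prop:mainpair}) shows $g(r_0,\dots,r_{n-1})$ cannot equal any $f(\langle u_a\rangle)$. The analytic input --- $C^1$ regularity of the closure functions plus the Sard bound --- is precisely what distinguishes algebraic closure from, say, computable closure, and it is what your sketch is missing.
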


In \cite{Fatalini2025}, the authors prove this theorem in the particular case where each $\mathbb{P}_i$ is Cohen forcing and they ask whether it holds true more generally (see \cite[Question~3.15]{Fatalini2025}). The question is originally motivated by \cite[Question 4.2]{Kanovei2021}, which considers Jensen reals. Note that in the case $n = 1$, the theorem simply states that the transcendence degree of (the reals of) a forcing extension (with new reals) over its ground model is maximal. This is relatively easy to see for Cohen forcing from the fact that a single Cohen real already adds a perfect set of mutually generic Cohen reals, and thus an algebraically independent set of size $\mathfrak{c}$ over the ground model. On the other hand, this is less immediate for other types of reals. Nevertheless, an argument for $n = 1$, attributed to B. De Bondt is provided in \cite{Fatalini2025}, which uses a generalization of real algrebraic closure named \emph{really closure} (see Definition~\ref{def:reallyclose} below). This trick is useful more generally and will constitute the outermost layer of our proof for the general case, just as in \cite{Fatalini2025}.

The main non-trivial case is $n = 2$ though and we will mainly focus on that one in our paper. The general case has an analogous proof but it is notationally more heavy and the notions that have to be introduced are less intuitive at first. We will point out the simple modifications that have to be made to reach Theorem~\ref{thm:main} in the last section. Thus we are first and foremost occupied with proving the following:

\begin{thm}\label{thm:2gen}
    Let $\mathbb{P}$ and $\mathbb{Q}$ be forcing notions adding new reals and let $G \times H$ be $\mathbb{P} \times \mathbb{Q}$-generic over $V$. Then, in $V[G\times H]$, $\mathbb{R}$ has transcendence degree $\mathfrak{c}$ over the field generated by $\mathbb{R}^{V[G]} \cup \mathbb{R}^{V[H]}$.
\end{thm}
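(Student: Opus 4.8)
The plan is to follow the outer strategy of the $n=1$ argument of \cite{Fatalini2025} — reducing everything to a statement about really closure — and to do the genuinely new work at the level of names and mutual genericity.

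\textbf{Reduction via really closure.} Recall (Definition~\ref{def:reallyclose}) that the really closure $\mathrm{rcl}(A)$ of a set $A\subseteq\mathbb R$ is an absolute pregeometry closure operator that refines the real-algebraic closure of the field generated by $A$, is countably generated (hence has finite character), and whose membership relation is suitably definable and absolute between models containing the relevant parameters. In particular, if $a_0,\dots,a_k\in\mathbb R^{V[G]}$ then, by absoluteness, $\mathrm{rcl}(\{a_0,\dots,a_k\})$ computed in $V[G\times H]$ agrees with the one computed in $V[G]$, hence is contained in $\mathbb R^{V[G]}$; symmetrically for $\mathbb R^{V[H]}$. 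Since $\mathrm{rcl}$ refines algebraic closure, a really-independent family over $\mathbb R^{V[G]}\cup\mathbb R^{V[H]}$ is a fortiori algebraically independent over the field generated by that union; so it suffices to produce, in $V[G\times H]$, such a really-independent family of size $\mathfrak c$. Building this family by a recursion of length $\mathfrak c$ and using finite character, the theorem reduces to the claim $(\dagger)$: for every $S\subseteq\mathbb R^{V[G\times H]}$ with $|S|<\mathfrak c$ there is a real $e\in V[G\times H]$ with $e\notin\mathrm{rcl}(\{x,y\}\cup F)$ for all $x\in\mathbb R^{V[G]}$, $y\in\mathbb R^{V[H]}$, and finite $F\subseteq S$.

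\textbf{Proving $(\dagger)$.} Assume $(\dagger)$ fails for some name $\dot S$ and fix a condition $(p,q)$ forcing the failure. Then every $\mathbb P\times\mathbb Q$-name $\dot e$ for a real is forced by an extension of $(p,q)$ into some $\mathrm{rcl}(\{\dot x,\dot y\}\cup\dot F)$, and by mixing we may attach to $\dot e$ a $\mathbb P$-name $\dot x$ for a real, a $\mathbb Q$-name $\dot y$ for a real, a name $\dot F$ for a finite subset of $\dot S$, and a name $\dot w$ for the really-closure witness, so that $(p,q)$ forces $\dot e$ to lie in the finite set that $\dot w$ declares. A name for a real involves only countably many conditions, so $\dot x$ and $\dot y$ live in subextensions of $V[G]$ and $V[H]$ generated by forcings of size at most $\mathfrak c$; this lets one perturb the two coordinates separately while keeping $\mathbb R^{V[G]}$, $\mathbb R^{V[H]}$ and $S$ fixed. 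Freezing $G$ and varying $H$: in $V[G]$ the forcing $\mathbb Q$ still adds a new real (a $\mathbb Q$-real over $V[G]$ that were old would lie in $V[G]\cap V[H]=V$), so by the $n=1$ analysis applied inside $V[G]$, a real of $V[G][H]$ really-algebraic over finitely many reals of $V[G]$ \emph{alone} already belongs to $\mathbb R^{V[G]}$ — symmetrically on the other side. One now chooses $\dot e$ to name a real that is new over both $V[G]$ and $V[H]$ and in addition ``sufficiently generic'', concretely a real extracted from the two new reals added by $\mathbb P$ and $\mathbb Q$ via the tree/really-closure construction of the $n=1$ case, so that its behaviour under perturbing one generic coordinate is not constrained by the other. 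Passing to a further extension in which $G$ and $H$ are independently perturbed by mutually generic copies, the finite set $W$ witnessing $e\in\mathrm{rcl}(\{x,y\}\cup F)$ is forced to compute $e$ from data on a single side, whence $e\in\mathbb R^{V[G]}\cup\mathbb R^{V[H]}$, contradicting the choice of $\dot e$.

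\textbf{Main obstacle and the general case.} The crux is the last step: extracting from ``$(p,q)\Vdash\dot e\in\mathrm{rcl}(\{\dot x,\dot y\}\cup\dot F)$'' that $\dot e$ is decided by one coordinate. This is precisely where the shape of really closure is used — its finiteness/uniqueness character (a really-closure witness pins $\dot e$ down to finitely many possibilities, unlike an arbitrary Borel image, for which the statement would be false) together with absoluteness is what makes the mutual-genericity perturbation bite. A secondary difficulty is that the parameters of $S$ lie in the extension and cannot be made generic; rather than folding the relevant finite $F$ into the ``$V[G]$-side'' or ``$V[H]$-side'' data (which is not possible), one carries $F$ along as a fixed extra parameter through the whole perturbation argument and checks that the conclusion survives. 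Finally, the passage from $n=2$ to arbitrary $n$ is notational: one iterates the two-sided perturbation over the proper subsets $a\subsetneq n$, with the really closure over $\bigcup_{a\subsetneq n}\mathbb R^{V[G_i:i\in a]}$ playing the role of $\mathrm{rcl}(\mathbb R^{V[G]}\cup\mathbb R^{V[H]}\cup S)$.
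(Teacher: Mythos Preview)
Your outer reduction via really closure is in the right spirit, but the core of your argument --- the ``perturbation'' step meant to prove $(\dagger)$ --- is not a proof; it is a sketch of a strategy with the decisive move left as an assertion. Concretely: you never specify what ``sufficiently generic'' means for $\dot e$, nor what ``the tree/really-closure construction of the $n=1$ case'' produces in this two-sided setting, and the sentence ``the finite set $W$ witnessing $e\in\mathrm{rcl}(\{x,y\}\cup F)$ is forced to compute $e$ from data on a single side, whence $e\in\mathbb R^{V[G]}\cup\mathbb R^{V[H]}$'' is exactly the statement requiring proof. Why should perturbing one generic coordinate collapse the witness to a single side? Nothing you have written implies this, and you yourself flag it as the ``main obstacle'' but then only gesture at ``finiteness/uniqueness character'' and ``absoluteness'' without saying how these combine to give the conclusion. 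There is also a smaller slip: your $(\dagger)$ has a single $x\in\mathbb R^{V[G]}$ and a single $y\in\mathbb R^{V[H]}$, whereas finite character only reduces membership in $\mathrm{rcl}(\mathbb R^{V[G]}\cup\mathbb R^{V[H]}\cup S)$ to finite \emph{tuples} from each side. And the whole recursion of length $\mathfrak c$ is unnecessary: by Lemma~\ref{lem:really} (De Bondt), producing \emph{one} real outside the really closure of $\mathbb R^{V[G]}\cup\mathbb R^{V[H]}$ already yields transcendence degree $\mathfrak c$.

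The paper supplies precisely what your sketch is missing, and by a route you do not anticipate. The analytic input is that the really closure is obtained by closing under a countable family $\mathcal F$ of $C^1$ functions (Lemma~\ref{lem:analyticclosure}, via the Implicit Function Theorem), and that a $C^1$ map $\mathbb R^n\to\mathbb R^m$ with $n<m$ has Lebesgue-null image (Lemma~\ref{lem:Sard}). One then constructs a single Borel $g\colon\mathbb R^2\to\mathbb R$ that is not \emph{similar on a perfect rectangle} (Definition~\ref{def:similar}) to any $f\in\mathcal F$ --- concretely, $g$ is a Borel injection into a perfect set of mutually random reals, so that for $m$ large enough the $m\times m$ grid of $g$-values avoids the null image of the associated $F\colon U^m\times V^m\to\mathbb R^{m^2}$ (Lemma~\ref{lem:unsimilar}). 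A clean mutual-genericity argument (Proposition~\ref{prop:mainpair}) then shows $g(r,s)\neq f(u,v)$ for any $f\in\mathcal F$, $u\in V[G]\setminus V$, $v\in V[H]\setminus V$, so $g(r,s)$ lies outside the really closure and Lemma~\ref{lem:really} finishes. Your proposal contains none of this analytic machinery (no $C^1$, no measure, no dimension count), and without it the perturbation idea does not close.
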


Let us remark that something specific must be used about algebraic closure for the theorem to hold. It is well-known for instance, that if $s_0$ and $s_1$ are mutually Sacks generic over $V$, then every real of $V[s_0, s_1]$ is computable from a pair $(t_0, t_1)$, $t_0 \in V[s_0]$, $t_1 \in V[s_1]$. There are essentially two analytic facts that are being used that apply to algebraic closure but not to computable closure. The first, given by Lemma~\ref{lem:analyticclosure}, is that the algebraic closure of a set (in fact the \emph{really closure}, see Definition~\ref{def:reallyclose}) can be obtained by closing it under countably many $C^1$ functions, something which follows from the Implicit Function Theorem. The second well-known fact, Lemma~\ref{lem:Sard}, is basically saying that $C^1$ functions cannot increase dimension. 

The combinatorial core of our argument can be applied fairly generally and is related to the canonization of two-variable functions on perfect rectangles (e.g. in a sense similar to \cite{KanoveiSabokZapletal}), see Definition~\ref{def:similar}. The following can be seen as a first approach to Theorem~\ref{thm:2gen} and is what led us on the path to resolving the question in the first place:

\begin{thm}\label{thm:multadd}
    Let $G \times H$ be $\mathbb{P} \times \mathbb{Q}$-generic and $r$, $s$ new real numbers in $V[G]$ and $V[H]$ respectively. Then $r \cdot s$ is not of the form $u + v$ for any $u \in V[G]$, $v \in V[H]$. 
\end{thm}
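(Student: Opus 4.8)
The plan is to argue by contradiction using a $2 \times 2$ grid of mutually generic filters, exploiting that the additive terms cancel under subtraction while the multiplicative term does not.

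Suppose toward a contradiction that $r \cdot s = u + v$ with $u \in V[G]$ and $v \in V[H]$. First I would fix $\mathbb{P}$-names $\dot r, \dot u$ for $r, u$ and $\mathbb{Q}$-names $\dot s, \dot v$ for $s, v$, and pick $p \in G$, $q \in H$ such that $(p,q) \Vdash_{\mathbb{P} \times \mathbb{Q}} \dot r \cdot \dot s = \dot u + \dot v$ and, using that $r$ and $s$ are new, also $p \Vdash_{\mathbb{P}} \dot r \notin \check V$ and $q \Vdash_{\mathbb{Q}} \dot s \notin \check V$; it then suffices to show no such condition $(p,q)$ can exist. Next I would pass to an auxiliary generic extension of $V$ and take $G_0 \times G_1 \times H_0 \times H_1$ that is $\mathbb{P} \times \mathbb{P} \times \mathbb{Q} \times \mathbb{Q}$-generic over $V$ with $(p,p,q,q)$ in it, setting $r_j := \dot r^{G_j}$, $u_j := \dot u^{G_j}$ and $s_i := \dot s^{H_i}$, $v_i := \dot v^{H_i}$ for $i,j \in \{0,1\}$. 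Since each $G_j \times H_i$ is $\mathbb{P} \times \mathbb{Q}$-generic over $V$ and contains $(p,q)$, this gives $r_j \cdot s_i = u_j + v_i$ for all four pairs $(i,j)$.

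The two facts to extract from mutual genericity are $s_0 \neq s_1$ and $r_0 \neq r_1$. For the first: reordering the product, $V[H_0] \cap V[G_0,G_1,H_1] = V$, and since $q \in H_0$ we have $s_0 \in V[H_0] \setminus V$, so $s_0 \notin V[G_0,G_1,H_1] \ni s_1$. The second is symmetric, using $V[G_0] \cap V[G_1,H_0,H_1] = V$ together with $r_0 \in V[G_0] \setminus V$ and $r_1 \in V[G_1,H_0,H_1]$. Now subtracting the $(j,1)$ equation from the $(j,0)$ equation eliminates $u_j$ and yields $r_j \cdot (s_0 - s_1) = v_0 - v_1$ for both $j = 0$ and $j = 1$; comparing these two identities and dividing by $s_0 - s_1 \neq 0$ forces $r_0 = r_1$, contradicting $r_0 \neq r_1$.

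The step I expect to be the real content, as opposed to bookkeeping, is recognizing that a full $2 \times 2$ rectangle of mutual generics is needed: a single extra generic on either side does not suffice, since the bilinear-plus-additive shape $r_j s_i = u_j + v_i$ only collapses to a forbidden identity $r_0 = r_1$ once there are two values on each side to difference against. The remaining ingredients — choosing the names and conditions, the elementary reordering-of-products facts about mutual genericity, and moving to an auxiliary generic extension to house the four filters — are routine.
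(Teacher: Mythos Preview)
Your argument is correct. The algebraic heart---subtracting the $(j,0)$ and $(j,1)$ equations to kill $u_j$, then dividing by $s_0 - s_1$---is exactly the content of the paper's Lemma~\ref{lem:multadd}, which shows that multiplication is not similar to addition on a $2\times 2$ grid. Where you differ is in how the grid of generics is produced: the paper factors the result through Proposition~\ref{prop:mainpair}, which uses a countable elementary submodel and a perfect-tree forcing to obtain continuum-many mutually generic filters (and hence in particular a $2\times 2$ grid), whereas you simply force with the fourfold product $\mathbb{P}\times\mathbb{P}\times\mathbb{Q}\times\mathbb{Q}$ below $(p,p,q,q)$. Your route is more elementary and self-contained for this particular statement, and it has the pleasant feature that it handles the degenerate cases $u\in V$ or $v\in V$ uniformly (you never need $u,v\notin V$, only $r,s\notin V$), while Proposition~\ref{prop:mainpair} as stated requires the $u,v$ to be new. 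The paper's abstraction, on the other hand, is what scales to the general similarity arguments needed for Theorems~\ref{thm:2gen} and~\ref{thm:main}.
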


The paper is organised as follows. In the next section, we cover some of the less set-theoretic preliminaries needed, for instance the analytic facts stated above. In Section~3 we prove Theorem~\ref{thm:2gen} and ~\ref{thm:multadd} and in Section~4 we generalise the results to obtain Theorem~\ref{thm:main}.

\section{Preliminaries}

First, we recall De Bondt's trick to get transcendence degree $\mathfrak{c}$. Recall that $\mathbb{R}_{>0}$ denotes the open set of positive real numbers.

\begin{definition}\label{def:reallyclose}
    We say that $p \colon \mathbb{R}_{>0} \to \mathbb{R}$ is a \emph{generalised polynomial in the coefficients} $a_i \neq 0, b_i \neq b_j \in \mathbb{R}$, $i,j < n$, $n \geq 1$, if it is of the form $$p(x) = \sum_{i < n} a_i x^{b_i}.$$
    
    We say that a subfield $F$ of $\mathbb{R}$ is \emph{really closed} if it is closed under the zeros of generalized polynomials in coefficients in $F$. To be more precise, whenever $a_i, b_i \in F$ are as above and $\sum_{i < n} a_i x^{b_i} = 0$, then $x \in F$. The \emph{really closure} of a set $S \subseteq \mathbb{R}$ is the smallest really closed subfield of $\mathbb{R}$ containing $S$.
\end{definition}

\begin{lemma}[{De Bondt, see \cite[Proposition 2.4]{Fatalini2025}}]\label{lem:really}
    Let $F \subseteq \mathbb{R}$ be really closed. If there is $x \in \mathbb{R} \setminus F$, i.e. $F$ is a proper subfield of $\mathbb{R}$, then the transcendence degree of $\mathbb{R}$ over $F$ is $\mathfrak{c}$.
\end{lemma}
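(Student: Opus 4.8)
The plan is to prove that $\operatorname{trdeg}(\mathbb{R}/F) \geq \mathfrak{c}$; since $|\mathbb{R}| = \mathfrak{c}$ bounds it from above, this yields equality. I would split into two cases according to the size of $F$ (recall that, being a subfield of $\mathbb{R}$, $F$ contains $\mathbb{Q}$, and $|F| \leq \mathfrak{c}$).

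\emph{Case $|F| < \mathfrak{c}$.} This case does not use really closedness at all. From the additivity of transcendence degree in the tower $\mathbb{Q} \subseteq F \subseteq \mathbb{R}$ we get $\operatorname{trdeg}(\mathbb{R}/\mathbb{Q}) = \operatorname{trdeg}(\mathbb{R}/F) + \operatorname{trdeg}(F/\mathbb{Q})$. The left-hand side equals $\mathfrak{c}$, while $\operatorname{trdeg}(F/\mathbb{Q}) \leq |F| < \mathfrak{c}$, so by absorption of infinite cardinal sums $\operatorname{trdeg}(\mathbb{R}/F) = \mathfrak{c}$.

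\emph{Case $|F| = \mathfrak{c}$.} Here really closedness enters. Since $\mathbb{Q} \subseteq F$, the field $F$ is in particular a $\mathbb{Q}$-vector space of cardinality $\mathfrak{c}$, hence $\dim_{\mathbb{Q}} F = \mathfrak{c}$; fix a $\mathbb{Q}$-linearly independent family $(r_\alpha)_{\alpha < \mathfrak{c}}$ inside $F$. As $F$ is a proper subfield, there is a real $x > 0$ with $x \notin F$, and necessarily $x \neq 1$ since $1 \in F$; thus $t \mapsto x^t$ is injective on $\mathbb{R}$. I claim that $\{x^{r_\alpha} : \alpha < \mathfrak{c}\}$ is algebraically independent over $F$, which gives $\operatorname{trdeg}(\mathbb{R}/F) \geq \mathfrak{c}$ and finishes the proof. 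Indeed, given distinct $\alpha_1,\dots,\alpha_n$ and a nonzero polynomial $P = \sum_{\vec{k} \in S} c_{\vec{k}}\, y_1^{k_1}\cdots y_n^{k_n} \in F[y_1,\dots,y_n]$ (with $S$ finite and each $c_{\vec{k}} \neq 0$), substituting $y_i = x^{r_{\alpha_i}}$ turns each monomial into $c_{\vec{k}}\, x^{\sum_i k_i r_{\alpha_i}}$. Because the $r_{\alpha_i}$ are $\mathbb{Q}$-linearly (hence $\mathbb{Z}$-linearly) independent, the exponents $b_{\vec{k}} := \sum_i k_i r_{\alpha_i}$ are pairwise distinct for distinct $\vec{k} \in S$, so no collapsing of monomials occurs and $P(x^{r_{\alpha_1}},\dots,x^{r_{\alpha_n}})$ is a nontrivial generalised polynomial $\sum_{\vec{k}\in S} c_{\vec{k}}\, x^{b_{\vec{k}}}$ all of whose coefficients $c_{\vec{k}}$ and exponents $b_{\vec{k}}$ lie in $F$. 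If it vanished, really closedness of $F$ would force $x \in F$, a contradiction; hence it is nonzero, proving the claim.

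The only point requiring a little care is the bookkeeping in the last step: one must check that the substituted expression is a \emph{bona fide} generalised polynomial in the sense of Definition~\ref{def:reallyclose} — distinct exponents, nonzero coefficients, at least one term — and this is exactly what $\mathbb{Q}$-linear independence of the chosen exponents $r_\alpha$ provides. Everything else reduces to routine cardinal arithmetic, so I do not anticipate any genuine obstacle.
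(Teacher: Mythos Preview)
Your proof is correct; the only small omission is the remark that a \emph{positive} $x\notin F$ can always be found (replace $x$ by $-x$ if necessary, since $F$ is closed under negation), which you use without comment. The paper itself does not prove this lemma but merely cites De Bondt via \cite[Proposition~2.4]{Fatalini2025}; your Case~2 is exactly De Bondt's argument, and your Case~1 dispatches the residual small-field situation by routine cardinal arithmetic.
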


The following somewhat implicitly also appears in \cite{Fatalini2025}. Recall that $C^1$ denotes the class of continuously differentiable functions.

\begin{lemma}\label{lem:analyticclosure}
    There is a countable set $\mathcal{F}$ of $C^1$ functions $f$ of the form $f \colon U \to \mathbb{R}$, $U$ an open subset of $\mathbb{R}^n$ for some $n$, so that for any non-empty $S \subseteq \mathbb{R}$, the really closure of $S$ is equal to $\{f(s_0, \dots, s_{k}) : f \in \mathcal{F}, s_0, \dots, s_{k} \in S \}$. 
    
    Moreover, $\mathcal{F}$ has this property in any forcing extension (reinterpreting the functions in the usual way).
\end{lemma}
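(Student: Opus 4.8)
The plan is to build $\mathcal{F}$ by hand from two ingredients: the elementary field operations, and the zeros of generalised polynomials obtained as $C^1$ functions of the coefficients via the Implicit Function Theorem. First I would note that, since field operations are polynomial (hence $C^1$, on all of $\mathbb{R}^2$ for addition, subtraction, multiplication, and on the open set $\{(x,y): y \neq 0\}$ for division), the field generated by $S$ is already captured by closing $S$ under finitely many $C^1$ functions. The real work is to handle the closure under zeros of generalised polynomials $p(x) = \sum_{i<n} a_i x^{b_i}$ in coefficients $a_i, b_i$ lying in the set built so far.

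The key step is a local analysis of the zero set. Fix $n \geq 1$ and a pattern; consider the function $P(a_0,\dots,a_{n-1}, b_0,\dots,b_{n-1}, x) = \sum_{i<n} a_i x^{b_i}$ on the open set $\{a_i \neq 0, b_i \neq b_j \ (i\neq j), x > 0\}$. This is $C^1$ (indeed $C^\infty$) in all variables. At a point where $P = 0$ and $\partial P/\partial x \neq 0$, the Implicit Function Theorem gives, on a neighbourhood, a $C^1$ function expressing $x$ as a function of the coefficients. Since the ambient space is a separable metric space and the zero locus is Lindelöf, I can cover the (relatively open) portion of the zero set where $\partial P/\partial x \neq 0$ by countably many such neighbourhoods, obtaining countably many $C^1$ solution functions for this $(n,\text{pattern})$; ranging over all finitely many patterns and all $n \in \mathbb{N}$ keeps the total collection countable. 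The remaining points of the zero set are those where also $\partial P/\partial x = 0$; but for fixed nonzero coefficients with distinct exponents, $p(x) = \sum a_i x^{b_i}$ is a nonzero real-analytic function of $x$ on $\mathbb{R}_{>0}$ (it is a nontrivial generalised Dirichlet polynomial in $\log x$), so its zeros are isolated and at each such zero some derivative is nonzero — in fact, I would argue more carefully: if $p$ and $p'$ vanish simultaneously at $x_0$, one can still recover $x_0$ from the coefficients, but it is cleaner to observe that the set of coefficient-tuples for which $p$ has a zero with $p'(x_0)=0$ is small, or simply to note that every zero of $p$ is in the really closure of the coefficients by definition, and handle the degenerate stratum by a secondary application of the Implicit Function Theorem to $\partial P/\partial x$ (and iterating), each stage again countable. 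Throwing all these functions, together with the field operations, into $\mathcal{F}$, and then closing $\mathcal{F}$ under composition (still countably many functions, with domains the appropriate open preimages), gives a countable $\mathcal{F}$ such that $\{f(\bar s) : f \in \mathcal{F}, \bar s \in S\}$ is closed under field operations and under zeros of generalised polynomials in its elements, hence contains the really closure; the reverse inclusion is immediate since each $f \in \mathcal{F}$ is a composition of field operations and zero-extractions of generalised polynomials, all of which the really closure is closed under.

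For the ``moreover'' clause, the point is that each $f \in \mathcal{F}$ is a $C^1$ function on a lightface-or-boldface Borel (indeed open) subset of some $\mathbb{R}^n$ with a Borel code, and the statements ``$f$ is $C^1$ on $U$'', ``$U$ is open'', ``$f$ is the unique solution branch of $P=0$ near such-and-such'', and ``the range of $\{f(\bar s):\bar s \in S\}$ is a really closed field containing $S$'' are all $\Pi^1_2$ (or simpler) assertions about the Borel codes, hence absolute between $V$ and any forcing extension by Shoenfield absoluteness together with the usual absoluteness of Borel function evaluation. Concretely, reinterpreting each code in $V[G]$ yields functions with the same defining properties, and the equation ``really closure of $S$ $=\{f(\bar s):f\in\mathcal F,\bar s\in S\}$'' was proved from those properties alone, so it continues to hold.

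The main obstacle I anticipate is the degenerate stratum of the zero set, where $\partial P/\partial x$ vanishes together with $P$: there the naive Implicit Function Theorem does not directly apply, and one must either stratify by the order of vanishing and peel off countably many $C^1$ branches at each level, or argue that such configurations are themselves cut out by lower-dimensional $C^1$ conditions to which the same machinery applies recursively. Making this bookkeeping clean — while keeping $\mathcal{F}$ countable and every function genuinely $C^1$ on an honest open set — is the delicate part; everything else (field operations, Lindelöf covering, composition closure, absoluteness) is routine.
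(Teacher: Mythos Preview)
Your approach is essentially the paper's: field operations, then the Implicit Function Theorem applied to $P(\bar a,\bar b,x)=\sum a_i x^{b_i}$, a countable cover by separability, and closure under composition; absoluteness at the end. The one place you diverge is exactly the point you flag as the main obstacle, the degenerate stratum where $p(x_0)=p'(x_0)=0$, and here the paper has a cleaner resolution than the stratification-and-recursion you sketch. The observation (which the paper imports as \cite[Proposition~3.6]{Fatalini2025}) is that the derivative of a generalised polynomial with coefficients in a field $F$ is again a generalised polynomial with coefficients in $F$: if $p(x)=\sum a_i x^{b_i}$ then $p'(x)=\sum a_i b_i x^{b_i-1}$, and after dropping the at most one term with $b_i=0$ this has the required form. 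Since $p$ is a nonzero real-analytic function on $\mathbb{R}_{>0}$, any zero $x_0$ has some finite order $k$, and then $q:=p^{(k-1)}$ is a generalised polynomial over $F$ with $q(x_0)=0$ and $q'(x_0)\neq 0$. So one may simply assume from the outset that $\partial P/\partial x\neq 0$ at the point in question, and your ``delicate part'' evaporates: no stratification, no recursion on order of vanishing, no worry about picking up spurious zeros of $p'$. With that in hand, your proof is the paper's proof.
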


\begin{proof}
    First, we note that a field is simply generated by recursively applying the functions $(x,y) \mapsto x\cdot y$, $(x,y) \mapsto x + y$, $x \mapsto -x$, $x \mapsto 1$ and $x \mapsto x^{-1}$, the last one of which is defined on the open set $\mathbb{R} \setminus \{ 0\}$. 
    
    To obtain the closure under zeros of generalized polynomials, we use the Implicit Function Theorem. To do so, we first observe that if $a$ is a zero of a generalized polynomial in coefficients in a field $F$, then it is also the zero of such a polynomial $p$, where the derivative of $p$ at $a$, $p'(a)$, is not $0$ (see \cite[Proposition~3.6]{Fatalini2025}). Now consider the general form of such a polynomial, with the coefficients as explicit variables: \begin{equation}\label{eq:genpol}
        p(x, y_0, \dots, y_{n-1}, z_0, \dots, z_{n-1}) = \sum_{i < n} y_i x^{z_i}.
    \end{equation}
    We regard $p$ as a function $\mathbb{R} \times U \to \mathbb{R}$, where $U \subseteq \mathbb{R}^{2n}$ is the open set of $(y_0, \dots, y_{n-1}, z_0, \dots, z_{n-1}) \in \mathbb{R}^{2n}$, such that $y_i \neq 0$, $z_i \neq z_j$ for $i, j < n$. Given any particular $s = (a, a_0, \dots, a_{n-1}, b_0, \dots, b_{n-1}) \in \mathbb{R} \times U$ such that $p(s) = 0$ and $\frac{\partial p}{\partial x}(s) \neq 0$, the Implicit Function Theorem (see e.g. \cite[Theorem~9.28]{Rudin}) guarantees that there are open sets $V \subseteq \mathbb{R}$ and $W \subseteq U$, with $s \in V \times W$, and a continuously differentiable $f \colon W \to \mathbb{R}$, so that for each $t \in W$, $f(t)$ is the unique element of $V$ such that $p(f(t), t) = 0$. In particular, the function $f$ is uniquely determined by $W$ and $V$ and does not depend on $s$. Using the separability of the spaces $\mathbb{R}^k$, we find a countable collection of such open sets $W \times V$ covering the set of all $s$ as above. The number of expressions for generalised polynomials as in (\ref{eq:genpol}) above is also only countable. Followingly, there is a countable collection of $C^1$ functions, so that each zero of a generalised polynomial is obtained by applying one such function to the coefficients. 

    The collection $\mathcal{F}$ claimed by the lemma then simply consists of all the possible compositions of functions just described. The fact that $\mathcal{F}$ works in any transitive model of set theory is a simple absoluteness result. 
\end{proof}

In fact, the functions in $\mathcal{F}$ can even be assumed to be real analytic, by a more general form of the Implicit Function Theorem, but $C^1$ is all we need.

The next lemma is a well-known analytical fact and follows for instance from the much more general Morse-Sard Theorem \cite{Sard}. We include the nice and short proof of this particular case for the readers convenience.

\begin{lemma}\label{lem:Sard}
    Let $F \colon U \to \mathbb{R}^m$ be $C^1$, where $U$ is an open subset of $\mathbb{R}^n$, for $n < m$. Then $F[U]$ has Lebesgue measure $0$ in $\mathbb{R}^m$.
\end{lemma}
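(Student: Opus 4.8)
The plan is to reduce to a compact cube, observe that a $C^1$ map is Lipschitz there, and then run the standard covering estimate in which the dimension gap $n<m$ forces the image to be null.

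First I would cover $U$ by countably many closed axis-parallel cubes $Q_j \subseteq U$, which is possible since $U$ is open (take, e.g., sufficiently small dyadic cubes). As $F[U] = \bigcup_j F[Q_j]$ and a countable union of Lebesgue-null sets is null, it suffices to prove that $F[Q]$ has measure $0$ for a single such cube $Q$.

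Next, on the compact convex set $Q$ the map $x \mapsto \lVert DF(x)\rVert$ (operator norm of the derivative) is continuous, hence bounded by some $L < \infty$. Applying the mean value inequality along the segment between any two points of $Q$ --- here the convexity of $Q$ is what makes this legitimate --- gives $\lVert F(x) - F(y)\rVert \le L \lVert x - y\rVert$ for all $x, y \in Q$, so the restriction of $F$ to $Q$ is $L$-Lipschitz.

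Finally comes the counting step. Say $Q$ has side length $\ell$, and fix $k \geq 1$. Subdivide $Q$ into $k^n$ congruent subcubes of side length $\ell/k$, each of diameter $\ell\sqrt{n}/k$. By the Lipschitz bound, the image of each subcube has diameter at most $L\ell\sqrt{n}/k$ and so is contained in a cube in $\mathbb{R}^m$ of side $L\ell\sqrt{n}/k$, which has $m$-dimensional volume $(L\ell\sqrt{n}/k)^m$. Summing over all $k^n$ subcubes bounds the outer measure of $F[Q]$ by $k^n (L\ell\sqrt{n})^m k^{-m} = (L\ell\sqrt{n})^m k^{n-m}$. Since $n < m$, letting $k \to \infty$ shows $F[Q]$ has outer measure $0$, hence measure $0$, which completes the proof. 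The argument has no serious obstacle; the only point requiring a little care is the reduction to a set on which $F$ is genuinely Lipschitz, which is exactly why one restricts to convex compact pieces such as cubes.
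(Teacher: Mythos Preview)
Your proof is correct and follows essentially the same strategy as the paper: reduce to compact pieces, obtain a Lipschitz bound from the $C^1$ hypothesis, and run a covering estimate in which the exponent gap $n<m$ forces the image measure to vanish. The only cosmetic difference is that the paper first embeds $U$ as $U\times\{a\}\subseteq\mathbb{R}^m$ and covers with balls (using that this slice is null in $\mathbb{R}^m$), whereas you work directly in $\mathbb{R}^n$ with an explicit cube subdivision.
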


\begin{proof}
    Let $a \in \mathbb{R}^{m-n}$ be arbitrary and define $\tilde F \colon U \times \{ a \} \to \mathbb{R}^m$ by $\tilde F(x,a) = F(x)$. $U \times \{a\}$ can be written as the union of countably many compact sets $K_i$, $i \in \omega$. Fix some $i \in \omega$. Since $\tilde F$ is $C^1$, there is a constant $M > 0$ so that $$\| \tilde F(x) - \tilde F(y)\| \leq M \| x-y\|,$$ for all $x,y \in K_i$. If $B$ is an open ball of radius $r$, it's volume is $cr^m$, for a constant $c$ only depending on $m$. Then $\tilde F[B \cap K_i]$ is included in a ball of radius $Mr$ and thus of volume $cM^mr^m$. Now note that $U \times \{ a\}$, and in particular $K_i$, is a null subset of $\mathbb{R}^m$. For any given $\varepsilon > 0$, there is a sequence of open balls covering $K_i$, with total measure $< \varepsilon/M^m$. It follows that $\tilde F[K_i]$ has measure $< \varepsilon$. Since $\varepsilon$ was arbitrary, $\tilde F[K_i]$, and further, $F[U] = \tilde F[U \times \{a\}] = \bigcup \tilde F[K_i]$, has measure $0$.
\end{proof}

\section{Two generics}

The following is the main combinatorial notion we use to obtain an impossibility result such as Theorem~\ref{thm:multadd}.

\begin{definition}\label{def:similar}
    Let $X_0$, $X_1$, $Y_0$, $Y_1$ and $Z$ be Polish spaces, $g\colon X_0 \times X_1 \to Z$ and $f \colon Y_0 \times Y_1 \to Z$. Then we say that $f$ and $g$ are \emph{similar on a perfect rectangle}, if there are non-empty perfect subsets $P_0 \subseteq X_0$, $P_1 \subseteq X_1$ and continuous injections $\varphi_0 \colon P_0 \to Y_0$, $\varphi_1 \colon P_1 \to Y_1$, so that for any $x \in P_0$ and $y \in P_1$, $$g(x,y) = f(\varphi_0(x), \varphi_1(y)).$$

    In other words, we may say that $f$ and $g$ are equal up to substitution of variables on some perfect rectangle. 
\end{definition}

It is worth noting that for Borel functions $f$ and $g$ as above, the property of being similar on a perfect rectangle can be expressed by a $\Sigma^1_2$ sentence and thus is absolute between transitive models of ZFC with the same ordinals.

\begin{prop}\label{prop:mainpair}
    Let $G \times H$ be $\mathbb{P} \times \mathbb{Q}$-generic over $V$. Let $g \colon X_0 \times X_1 \to Z$ and $f \colon Y_0 \times Y_1 \to Z$ as above be Borel functions in $V$, not similar on a perfect rectangle. Further, let $r \in (X_0)^{V[G]} \setminus V$ and $s \in (X_1)^{V[H]} \setminus V$. Then $g(r,s)$ is not of the form $f(u,v)$ for any $u \in (Y_0)^{V[G]} \setminus V$ and $v \in (Y_1)^{V[H]} \setminus V$.
\end{prop}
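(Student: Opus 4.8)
The plan is to argue by contradiction, extracting from a putative counterexample a pair of names and conditions that force too much, and then using mutual genericity to produce the continuous injections witnessing similarity on a perfect rectangle.

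\medskip

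Suppose toward a contradiction that some condition $(p_0, q_0) \in \mathbb{P} \times \mathbb{Q}$ forces that $\dot r \in (X_0)^{V[\dot G]} \setminus V$, $\dot s \in (X_1)^{V[\dot H]} \setminus V$, $\dot u \in (Y_0)^{V[\dot G]} \setminus V$, $\dot v \in (Y_1)^{V[\dot H]} \setminus V$ and $g(\dot r, \dot s) = f(\dot u, \dot v)$, where $\dot r, \dot u$ are $\mathbb{P}$-names and $\dot s, \dot v$ are $\mathbb{Q}$-names. The first step is a standard reflection/fusion construction on the $\mathbb{P}$-side. Working below $p_0$, I would build a perfect tree of conditions $\langle p_\sigma : \sigma \in 2^{<\omega}\rangle$ together with finite approximations to the values of $\dot r$ and $\dot u$, so that along each branch $x \in 2^\omega$ the conditions decide $\dot r$ and $\dot u$ to be reals $r_x \in X_0$, $u_x \in Y_0$ depending continuously on $x$, with $x \mapsto r_x$ injective (possible since $\dot r$ is forced to be non-ground-model, so distinct branches can be forced to give distinct values). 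Do the same on the $\mathbb{Q}$-side below $q_0$ to get $\langle q_\tau : \tau \in 2^{<\omega}\rangle$ with continuous maps $y \mapsto s_y \in X_1$ (injective) and $y \mapsto v_y \in Y_1$. The key point is that, for branches $x$ and $y$, the pair $(p_{x\restriction n}, q_{y\restriction n})$ for all $n$ together force $g(\dot r,\dot s) = f(\dot u,\dot v)$, hence by continuity and Borel absoluteness for the two-step product forcing we get $g(r_x, s_y) = f(u_x, v_y)$ as an identity that holds in $V$ — here I use that this is a $\Pi^1_1$ (or simpler) statement about Borel functions and reals coded in $V$, so it transfers down. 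Now let $P_0 = \{r_x : x \in 2^\omega\}$, $P_1 = \{s_y : y \in 2^\omega\}$, which are non-empty perfect subsets of $X_0$, $X_1$; define $\varphi_0$ on $P_0$ by $\varphi_0(r_x) = u_x$ and $\varphi_1$ on $P_1$ by $\varphi_1(s_y) = v_y$. These are well-defined (by injectivity of $x \mapsto r_x$ and $y \mapsto s_y$) and continuous, and they satisfy $g(x', y') = f(\varphi_0(x'), \varphi_1(y'))$ on $P_0 \times P_1$. If $\varphi_0$ and $\varphi_1$ are also injective, this contradicts the assumption that $f$ and $g$ are not similar on a perfect rectangle.

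\medskip

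The main obstacle is the last requirement: ensuring $\varphi_0$ and $\varphi_1$ are injective, i.e. that $x \mapsto u_x$ and $y \mapsto v_y$ can be arranged injective as well — or, failing that, handling the case where they collapse many branches to a single value. If $\dot u$ is forced into $V$ we have already excluded it; since $\dot u \notin V$, a genericity argument below any node should let us split $u$ just as we split $r$, refining the perfect tree so that both $x \mapsto r_x$ and $x \mapsto u_x$ are injective simultaneously (interleave the splitting requirements for the two names during the fusion). The same for the $\mathbb{Q}$-side. One must be slightly careful that refining to make $u$ split does not destroy the earlier commitments about $r$; this is routine in a fusion argument since at each stage only finitely many conditions are involved and each can be further refined. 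A secondary subtlety is the absoluteness step: I should check that the statement ``$g(r_x,s_y) = f(u_x,v_y)$'' really does reflect from $V[G\times H]$ to $V$ for the specific $V$-coded reals produced — this follows because the reals $r_x, s_y, u_x, v_y$ lie in $V$ by construction (they are read off from the tree of conditions, which is in $V$), the functions $f, g$ are Borel-coded in $V$, and equality of Borel function values at $V$-reals is absolute.
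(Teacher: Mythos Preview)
Your overall strategy is exactly the paper's: assume a condition forces the equation, build perfect trees of conditions on each side that progressively decide the names, read off continuous injections $x\mapsto r_x,u_x$ and $y\mapsto s_y,v_y$, and conclude similarity on the rectangle $P_0\times P_1$. Your handling of the injectivity of $\varphi_0,\varphi_1$ by interleaving splitting requirements for $\dot r,\dot u$ (resp.\ $\dot s,\dot v$) is also fine and matches what the paper gets for free from its construction.

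The one genuine gap is the step where you pass from ``$(p_{x\restriction n},q_{y\restriction n})$ forces $g(\dot r,\dot s)=f(\dot u,\dot v)$ for every $n$'' to ``$g(r_x,s_y)=f(u_x,v_y)$ holds in $V$''. Your appeal to ``continuity and Borel absoluteness'' does not close this: each single condition $(p_{x\restriction n},q_{y\restriction n})$ only pins down finite pieces of the names, and in a generic extension through that condition the names need not evaluate to $r_x,s_y,u_x,v_y$. To evaluate the names to exactly these $V$-reals you would need a generic over $V$ containing the \emph{entire} branch $\{(p_{x\restriction n},q_{y\restriction n}):n\in\omega\}$, and for an arbitrary poset such a filter need not exist (the sequence may have no lower bound). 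Since $f,g$ are merely Borel, not continuous, you also cannot recover the identity by a limiting argument from the finite approximations. The paper's fix is precisely the missing ingredient: take a countable elementary submodel $M$ containing all the data, and build the two trees \emph{generically over $M$} (equivalently, interleave into your fusion the requirement of meeting every dense set in $M$). Then for every $x,y\in 2^\omega$ the branch generates a filter $G_x\times H_y$ that is $\mathbb{P}\times\mathbb{Q}$-generic over $M$, so in $M[G_x,H_y]$ the names evaluate to $r_x,s_y,u_x,v_y$ and the forced equation holds there; absoluteness of Borel-function equality at ground-model points then gives it in $V$. Once you insert this countable-model step, your argument is the paper's proof.
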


\begin{proof}
    Let $\dot r$, $\dot u$ be $\mathbb{P}$-names and $\dot s$, $\dot v$ be $\mathbb{Q}$-names for $r$, $u$, $s$ and $v$ respectively. Suppose towards a contradiction, that there is $(p, q) \in \mathbb{P} \times \mathbb{Q}$, such that $$(p, q) \Vdash \dot r, \dot s, \dot u, \dot v \notin V \wedge g(\dot r, \dot s) = f(\dot u, \dot v),$$ where the names are viewed as $\mathbb{P} \times \mathbb{Q}$-names in the natural way. 

    Let $M$ a countable elementary submodel of some large fragment of the universe, reflecting the above and containing all relevant parameters. Then we can find a ``perfect rectangle" of generics over $M$, witnessing that $g$ and $f$ are similar. 
    
    To be more precise, consider the forcing notion in $M$ consisting of pairs $(h_0, h_1)$, where $h_0 \colon 2^{< n} \to \{ p' \leq p : p' \in \mathbb{P}\}$ and $h_1 \colon 2^{<n} \to \{ q' \leq q : q' \in \mathbb{Q}\}$, for some $n \in \omega$, and $\eta \subseteq \nu \in 2^{<n}$ implies $h_i(\nu) \leq h_i(\eta)$, for $i \in 2$. The order is given by $(h_0', h_1') \leq (h_0, h_1)$ iff $h_0 \subseteq h_0'$ and $h_1 \subseteq h_1'$. A generic over $M$ induces reverse-order-preserving functions $\chi_0 \colon 2^{< \omega} \to \mathbb{P}$ and $\chi_1 \colon 2^{< \omega} \to \mathbb{Q}$. Then the following properties are straightforward genericity arguments:

    \begin{enumerate}
        \item For every $x,y \in 2^\omega$, $\{ \chi_0(x \restriction n) : n \in \omega \} \subseteq \mathbb{P}$ and $\{ \chi_1(y \restriction n) : n \in \omega \} \subseteq \mathbb{Q}$ generate filters $G_x$ and $H_y$, respectively, with $G_x \times H_y$ being $\mathbb{P} \times \mathbb{Q}$-generic over $M$.
        \item For $\tau \in \{ \dot r, \dot s, \dot u, \dot v \}$, the map $e_\tau$ with domain $2^\omega$, given by $$e_\tau(x) =\begin{cases}
             \tau^{G_x} &\text{ if } \tau \in \{\dot r, \dot u \}\\
            \tau^{H_x} &\text{ if } \tau \in \{\dot s, \dot v \}
        \end{cases}$$ is a continuous injection from $2^\omega$ into $X_0$, $X_1$, $Y_0$ or $Y_1$ respectively.
        \item For each $x, y \in 2^\omega$, $$g(e_{\dot r}(x), e_{\dot s}(y)) = f(e_{\dot u}(x), e_{\dot v}(y)).$$
    \end{enumerate}

    Now it suffices to note that $P_0 := e_{\dot r}[2^\omega] \subseteq X_0$ and $P_1 := e_{\dot s}[2^\omega] \subseteq X_1$ are perfect sets, being the injective and continuous image of $2^\omega$, and that $\varphi_0 := e_{\dot u} \circ e_{\dot r}^{-1}$ and $\varphi_1 := e_{\dot v} \circ e_{\dot s}^{-1}$ are continuous injections. 
\end{proof}

In fact, in the argument we make, a much weaker notion than that of Definition~\ref{def:similar} will be sufficient and it is worth pointing this out. 

\begin{definition}
    Let $f$ and $g$ be as above, $n \leq \omega$. Then $f$ and $g$ are said to be \emph{similar on an $n \times n$ grid} if there are $n$-sized sets $P_0 \subseteq X_0$, $P_1 \subseteq X_1$ and injections $\varphi_0 \colon P_0 \to Y_0$, $\varphi_1 \colon P_1 \to Y_1$, so that for any $x \in P_0$ and $y \in P_1$, $$g(x,y) = f(\varphi_0(x), \varphi_1(y)).$$
\end{definition}

Obviously, being similar on a perfect rectangle is much stronger then being similar on an $n \times n$ grid, for $n \leq \omega$. The following simple observation leads to Theorem~\ref{thm:multadd}.

\begin{lemma}\label{lem:multadd}
    Multiplication is not similar to addition on a $2 \times 2$ grid  (as maps from $\mathbb{R}\times \mathbb{R}$ to $\mathbb{R}$). 
\end{lemma}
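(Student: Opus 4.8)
The plan is to argue by contradiction, exploiting the fact that a function of the form $(x,y) \mapsto \varphi_0(x) + \varphi_1(y)$ has vanishing ``mixed second difference'', whereas multiplication does not. So suppose multiplication were similar to addition on a $2 \times 2$ grid. Then there are $2$-element sets $P_0 = \{x_0, x_1\}$ and $P_1 = \{y_0, y_1\}$ (so $x_0 \neq x_1$ and $y_0 \neq y_1$) and injections $\varphi_0, \varphi_1 \colon \mathbb{R} \to \mathbb{R}$ with $x_i \cdot y_j = \varphi_0(x_i) + \varphi_1(y_j)$ for all $i, j \in 2$.

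Writing $a_i = \varphi_0(x_i)$ and $b_j = \varphi_1(y_j)$, I would look at the alternating sum
\begin{equation*}
S = x_0 y_0 - x_0 y_1 - x_1 y_0 + x_1 y_1.
\end{equation*}
On the one hand, substituting $x_i y_j = a_i + b_j$ gives $S = (a_0 + b_0) - (a_0 + b_1) - (a_1 + b_0) + (a_1 + b_1) = 0$, since each $a_i$ and each $b_j$ occurs once with a plus and once with a minus sign. On the other hand, $S = (x_0 - x_1)(y_0 - y_1) \neq 0$, because $x_0 \neq x_1$ and $y_0 \neq y_1$. This contradiction proves the lemma.

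I do not expect any genuine obstacle here; the only point to keep in mind is that injectivity of $\varphi_0$ and $\varphi_1$ is not even needed for the computation, while distinctness of the two elements of $P_0$ (resp.\ $P_1$) — which is automatic — is exactly what makes $S \neq 0$. The same argument moreover shows that multiplication is not similar to addition on any $n \times n$ grid with $n \geq 2$: restricting to any two rows and two columns already produces the contradiction above.
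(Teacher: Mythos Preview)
Your argument is correct, and the ``mixed second difference'' idea is a clean way to phrase it: the identity $x_iy_j=a_i+b_j$ forces $S=0$, while the factorization $S=(x_0-x_1)(y_0-y_1)$ forces $S\neq 0$. Two remarks on how this compares with the paper. First, you and the paper actually handle opposite directions of the (asymmetric) similarity relation: you assume $x\cdot y=\varphi_0(x)+\varphi_1(y)$ on the grid, whereas the paper's proof assumes $x+y=\varphi_0(x)\cdot\varphi_1(y)$ and derives $\varphi_0(a)=\varphi_0(b)$ from $(a+c)-(a+d)=(b+c)-(b+d)$, using injectivity of $\varphi_1$ to cancel the common factor $\varphi_1(c)-\varphi_1(d)$. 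Your direction is in fact the one invoked in the application (Theorem~\ref{thm:multadd} via Proposition~\ref{prop:mainpair}), where $g$ is multiplication and $f$ is addition, so your reading is the natural one there; the paper's proof establishes the dual statement. Second, as you observe, your version does not use injectivity of the $\varphi_i$ at all --- only that $|P_0|=|P_1|=2$ --- whereas the paper's direction genuinely needs injectivity to avoid dividing by zero. A tiny cosmetic point: by the definition, the injections $\varphi_i$ have domain $P_i$, not all of $\mathbb{R}$; this does not affect your computation.
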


\begin{proof}
Suppose towards a contradiction that there are $P_0 = \{a, b \}$, $P_1 = \{ c, d \}$ and injections $\varphi_0$, $\varphi_1$, such that $x + y = \varphi_0(x) \cdot \varphi_1(y)$, for $x \in P_0$, $y \in P_1$. Then we have that $(a+c)-(a+d)=c-d=(b+c)-(b+d)$. Thus also \begin{align*}
    \varphi_0(a)(\varphi_1(c) -\varphi_1(d)) &= \varphi_0(a) \cdot \varphi_1(c) - \varphi_0(a) \cdot \varphi_1(d) \\ &= \varphi_0(b) \cdot \varphi_1(c) - \varphi_0(a) \cdot \varphi_1(d) \\ &= \varphi_0(b)(\varphi_1(c) -\varphi_1(d)).
\end{align*}
Since $\varphi_1$ is injective, $\varphi_1(c) -\varphi_1(d) \neq 0$. Hence, we obtain that $\varphi_0(a) = \varphi_0(b)$. But this contradicts the injectivity of $\varphi_0$.
\end{proof}

\begin{lemma}\label{lem:unsimilar}
    Let $\mathcal{F}$ be a countable set of $C^1$ functions of the form $f \colon U \times V \to \mathbb{R}$, for open sets $U, V \subseteq \mathbb{R}^n$, $n \geq 1$. Then there is a Borel function $g \colon \mathbb{R} \times \mathbb{R} \to \mathbb{R}$ not similar to any $f \in \mathcal{F}$ on an $\omega \times \omega$ grid.
\end{lemma}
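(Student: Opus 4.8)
I would begin by unwinding the definition: $g$ is similar to $f\colon U\times V\to\mathbb R$ on an $\omega\times\omega$ grid precisely when there are injective sequences $(x_i)_{i\in\omega}$, $(y_j)_{j\in\omega}$ of reals together with injective sequences $(u_i)$ in $U$ and $(v_j)$ in $V$ such that $g(x_i,y_j)=f(u_i,v_j)$ for all $i,j$. This notion is downward absolute under passing to subsequences, and if $g$ were similar to $f$ on a perfect rectangle it would be similar on an $\omega\times\omega$ grid (restrict to countable dense subsets), so the statement as phrased is more than what is needed later. The plan is to build a \emph{bounded} Borel $g\colon\mathbb R^2\to\mathbb R$ for which no $f\in\mathcal F$ admits such a configuration.

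The mechanism I would use generalises the proof of Lemma~\ref{lem:multadd}. For the second difference $\Delta h(x,x';y,y'):=h(x,y)-h(x,y')-h(x',y)+h(x',y')$, similarity forces $\Delta g(x_i,x_{i'};y_j,y_{j'})=\Delta f(u_i,u_{i'};v_j,v_{j'})$, while for $C^1$ $f$ one has
\[
\Delta f(u,u';v,v')=\int_0^1\big(\nabla_u f(u'+s(u-u'),v)-\nabla_u f(u'+s(u-u'),v')\big)\cdot(u-u')\,ds ,
\]
so, just as the second difference of addition vanishes identically, the second differences of a $C^1$ function become negligible once the arguments cluster in a compact part of the domain. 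Hence, if the witnessing parameters $(u_i)$, $(v_j)$ eventually lie in a compact rectangle $K_0\times K_1\subseteq U\times V$, then passing to a Cauchy subsequence of the $u_i$ and using uniform continuity of $\nabla f$ on $K_0\times K_1$ one gets, for every $\varepsilon>0$, indices $i\neq i'$ with $\sup_{j\neq j'}|\Delta g(x_i,x_{i'};y_j,y_{j'})|<\varepsilon$; more sharply, local Lipschitzness makes $f\restriction K_0\times K_1$ lie within every $\varepsilon$ of a finite‑rank function, so $(g(x_i,y_j))_{i,j}$ would be within every $\varepsilon$ of one. So I want a Borel $g$ that defeats such coincidences on every infinite rectangle, at least in this ``compact'' situation. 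I do not expect a single fixed invariant of the value matrix to do the job — by an Egorov‑type argument every bounded $g$ has, on any infinite rectangle, an infinite subrectangle whose columns cluster uniformly, and is hence approximately finite rank there — so the construction has to exploit that the \emph{same} $f$ serves all rows and, crucially, that as the $v_j$ cluster in $V$ the columns $f(\cdot,v_j)$ must glue together $C^1$‑ly in $v$. Producing the required Borel $g$, via a combinatorial recipe on the coordinates of a fixed Borel coding of $\mathbb R$ by $2^\omega$, is one of the two genuinely laborious points.

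The second — and, I expect, the main obstacle — is reducing to that ``compact'' situation. Since $g$ is bounded, $(f(u_i,v_j))$ is bounded, so after passing to an infinite subgrid we may assume $u_i\to u_*$ and $v_j\to v_*$ in the one‑point compactifications; if $u_*\in U$ and $v_*\in V$ the subgrid eventually lies in a compact subrectangle and the discussion above applies. The trouble is when, say, $u_*$ lies in $\partial U$ or equals $\infty$: there neither $f$ nor its derivatives are controlled along $(u_i)$, and one must extract the information from boundedness of $g$ alone. When $v_*\in V$ I would work on a compact neighbourhood $K_1$ of $v_*$ with the slices $\psi_i=f(u_i,\cdot)\restriction K_1$: if they are equi‑Lipschitz then, being uniformly bounded at the $v_j$, they are precompact in $C(K_1)$ by Arzelà–Ascoli and the value matrix is again approximately finite rank; otherwise one must thin the subgrid further to recover control, and the symmetric cases ($v_*\notin V$, or both parameters escaping) are treated likewise. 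Packaging this ``escape'' analysis cleanly, together with the Borel construction of $g$, is where the weight of the proof sits; everything else is the elementary $C^1$ estimate displayed above plus the remark on downward absoluteness.
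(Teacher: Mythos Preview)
Your proposal is not a proof: the two steps you yourself flag as ``genuinely laborious'' --- the actual construction of $g$ and the escape analysis when the parameters $u_i$ or $v_j$ leave every compact subset of $U$ or $V$ --- are never carried out, and your own Egorov remark shows why the first one is delicate. You want the value matrix $(g(x_i,y_j))$ to resist being approximately finite rank on every infinite subrectangle, yet you observe that any bounded $g$ fails this; you then appeal vaguely to the fact that the columns $f(\cdot,v_j)$ must ``glue together $C^1$-ly in $v$'', but no concrete invariant is produced from this, and precisely in the escape case (where $u_i\to\partial U$ or $\infty$) that $C^1$ control disappears. As it stands you have an outline whose two load-bearing pillars are missing, and no indication that either can actually be built.

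The paper's argument is completely different and avoids all of this analysis via a dimension count. For a fixed $f\colon U\times V\to\mathbb R$ with $U,V\subseteq\mathbb R^n$, choose $m$ with $2nm<m^2$ (e.g.\ $m=2n+1$) and consider
\[
F\colon U^m\times V^m\to\mathbb R^{m^2},\qquad F(u_0,\dots,u_{m-1},v_0,\dots,v_{m-1})_{i,j}=f(u_i,v_j).
\]
This is a $C^1$ map from an open subset of $\mathbb R^{2nm}$ into $\mathbb R^{m^2}$, so by Lemma~\ref{lem:Sard} its image is Lebesgue null. Now take a countable elementary submodel $M$ containing all of $\mathcal F$, a perfect set $P\subseteq\mathbb R$ of mutually random reals over $M$, and let $g$ be \emph{any} Borel injection of $\mathbb R\times\mathbb R$ into $P$. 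For each $f\in\mathcal F$ the null set $F[U^m\times V^m]$ lies in $M$, so no $m^2$-tuple of distinct points of $P$ can belong to it; hence the $m^2$ (distinct) values of $g$ on any $m\times m$ grid can never be matched by values of $f$ on an $m\times m$ parameter grid, and in particular $g$ is not similar to $f$ even on a finite grid, let alone an $\omega\times\omega$ one. The entire second-difference and compactness discussion is bypassed; the only use of $C^1$ is to invoke Lemma~\ref{lem:Sard}.
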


\begin{proof}
Let $M$ be a countable elementary submodel of some sufficiently large fragment of $V$ containing all of $\mathcal{F}$, and let $P \subseteq \mathbb{R}$ be a perfect set of mutually generic random reals over $M$, i.e. for every $k \in \omega$ and every $\bar z \in P^k$, $\bar z$ is not a member of any Lebesgue null subset of $\mathbb{R}^k$ in $M$. Let $g \colon \mathbb{R} \times \mathbb{R} \to \mathbb{R}$ be any Borel injection into $P$. Then we claim that $g$ works. 

Towards this end, let $f \colon U \times V \to \mathbb{R}$ be in $\mathcal{F}$, where $U, V \subseteq \mathbb{R}^n$. Let $m \in \omega$ be large enough so that $2nm < m^2$ (e.g. $m = 2n + 1$). Consider the map $F \colon U^m \times V^m \to \mathbb{R}^{m^2}$, where for $i, j <m$, $$F(x_0, \dots, x_{m-1}, y_0, \dots, y_{m-1})_{i,j} = f(x_i, y_j).$$

$F$ is a $C^1$ map from an open subset of $\mathbb{R}^{2nm}$ to $\mathbb{R}^{m^2}$. By Lemma~\ref{lem:Sard}, $F[U^m \times V^m]$ has measure $0$ in $\mathbb{R}^{m^2}$ and it is an element of $M$, by elementarity. It readily follows that $g$ cannot be similar to $f$ on an $m \times m$-grid, since the set of values of $g$ on any set of the form $P_0 \times P_1$, with $\vert P_0\vert = \vert P_1 \vert = m$, disagrees with the set of values of $f$ on such a set.
\end{proof}

The proof of Theorem~\ref{thm:2gen} is easily obtained with the help of Lemma~\ref{lem:analyticclosure} and Lemma~\ref{lem:really}.

\begin{proof}[Proof of Theorem~\ref{thm:2gen}]
    Using Lemma~\ref{lem:analyticclosure}, there is a countable set of $C^1$ functions $\mathcal{F} \in V$, so that each element of the really closure of $\mathbb{R}^{V[G]} \cup \mathbb{R}^{V[H]}$ is of the form $f(u,v)$ for some $f \in \mathcal{F}$, $u \in (\mathbb{R}^n)^{V[G]} \setminus V$ and $v \in (\mathbb{R}^n)^{V[H]} \setminus V$, $n \geq 1$.\footnote{Note specifically, that we can add ``dummy variables" to any given $f \in \mathcal{F}$, in order to write every element precisely in this form. For instance, if $z \in V$ is used to generate an element of the really closure, then we can use $u = (z,r) \in V[G] \setminus V$, for some $z \in V[G] \setminus V$.} Let $g$ be as in the previous lemma, $r$ a new real of $V[G]$ and $s$ a new real of $V[H]$. By Proposition~\ref{prop:mainpair}, $g(r,s)$ is not of the form just described, and followingly is not a member of the really closure of $\mathbb{R}^{V[G]} \cup \mathbb{R}^{V[H]}$. Lemma~\ref{lem:really} finishes the argument.
\end{proof}

\section{Any number of generics}

The following is the ad-hoc generalisation of Definition~\ref{def:similar} that applies to our situation.

\begin{definition}
    Let $g \colon X^n \to Z$ and $f \colon \prod_{a \subsetneq n} Y_a \to Z$, for some $n \geq 2$, where $X$, $Y_a$ and $Z$ are Polish spaces. Then we say that $f$ and $g$ are \emph{similar}, if there are non-empty perfect sets $P_0, \dots, P_{n-1} \subseteq X$ and continuous injections $\varphi_a \colon \prod_{i \in a} P_i \to Y_a$, for each $a \subsetneq n$, such that for every $x_0 \in P_0, \dots, x_{n-1} \in P_{n-1}$, $$g(x_0, \dots, x_{n-1}) = f(\langle \varphi_a(\langle x_i : i \in a \rangle) : a \subsetneq n \rangle).$$
\end{definition}

The analogue of Proposition~\ref{prop:mainpair} is proved in essentially the same way and we leave the details to the reader. 

\begin{prop}
    Assume that $f$ and $g$ as above are not similar. Let $r_i \in X^{V[G_i]} \setminus V$, for each $i < n$. Then $g(r_0, \dots, r_{n-1})$ is not of the form $f( \langle u_a : a \subsetneq n \rangle)$, where $u_a \in Y_a^{V[ G_i : i \in a ]} \setminus \bigcup_{b \subsetneq a} V[G_i : i \in b ]$, for each $a \subsetneq n$. 
\end{prop}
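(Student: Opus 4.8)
The plan is to mimic the proof of Proposition~\ref{prop:mainpair}, replacing the two-variable product forcing by the full product $\mathbb{P}_0 \times \dots \times \mathbb{P}_{n-1}$ and carefully tracking which coordinates each name depends on. Suppose towards a contradiction that some condition $(p_0, \dots, p_{n-1})$ forces $g(\dot r_0, \dots, \dot r_{n-1}) = f(\langle \dot u_a : a \subsetneq n\rangle)$, together with the non-triviality clauses $\dot r_i \notin V$ and $\dot u_a \notin \bigcup_{b \subsetneq a} V[G_i : i \in b]$. Here $\dot r_i$ is a $\mathbb{P}_i$-name and $\dot u_a$ is a $\prod_{i \in a}\mathbb{P}_i$-name (which we may view as a $\mathbb{P}_0\times\dots\times\mathbb{P}_{n-1}$-name depending only on the coordinates in $a$). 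Pass to a countable elementary submodel $M$ containing all these parameters.

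Next I would build the appropriate tree-of-conditions forcing inside $M$: a condition is a tuple $\langle h_i : i < n\rangle$ where each $h_i \colon 2^{<k} \to \{p \leq p_i : p \in \mathbb{P}_i\}$ is order-reversing along the tree, for some common height $k$. A generic yields order-reversing maps $\chi_i \colon 2^{<\omega}\to\mathbb{P}_i$, and for $\bar x = \langle x_i : i < n\rangle \in (2^\omega)^n$ the branches $\{\chi_i(x_i\restriction k) : k\in\omega\}$ generate a filter $G^i_{x_i}$, with $\prod_{i<n} G^i_{x_i}$ being $\prod_{i<n}\mathbb{P}_i$-generic over $M$ (straightforward genericity, as in Proposition~\ref{prop:mainpair}). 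Crucially, the evaluation $e_i(x_i) := \dot r_i^{G^i_{x_i}}$ depends only on the single coordinate $x_i$, and for a proper subset $a \subsetneq n$ the evaluation $e^a_{\dot u_a}(\langle x_i : i \in a\rangle) := \dot u_a^{\langle G^i_{x_i} : i \in a\rangle}$ depends only on the coordinates indexed by $a$. Genericity arguments give that each $e_i$ is a continuous injection on $2^\omega$, that each $e^a_{\dot u_a}$ is a continuous injection on $(2^\omega)^a$ (injectivity coming from the forced statement that $\dot u_a \notin \bigcup_{b\subsetneq a} V[G_i : i\in b]$, which lets us separate values whenever the $a$-indexed tuples differ in at least one coordinate), and that $g(\langle e_i(x_i) : i<n\rangle) = f(\langle e^a_{\dot u_a}(\langle x_i : i\in a\rangle) : a\subsetneq n\rangle)$ holds for all $\bar x$. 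Setting $P_i := e_i[2^\omega]$ and $\varphi_a := e^a_{\dot u_a} \circ (\prod_{i\in a} e_i)^{-1}$ then witnesses that $f$ and $g$ are similar, a contradiction.

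The main obstacle is the injectivity of the maps $e^a_{\dot u_a}$ on $(2^\omega)^a$, i.e.\ that if two $a$-indexed branch-tuples differ in at least one coordinate then the corresponding values of $\dot u_a$ differ. The point is that if they agree on a sub-tuple indexed by $b \subsetneq a$ but differ at some $i \in a\setminus b$, then $\dot u_a$ evaluated at the first tuple lies in $V[G_j : j\in b][\text{the rest}]$ in a way that, were the two values equal, would place $\dot u_a^{\langle G^j_{x_j} : j\in a\rangle}$ into $\bigcup_{b\subsetneq a} M[G_j : j\in b]$ — contradicting the forced non-triviality clause pulled down to $M$. This requires arranging the tree-forcing so that, for each fixed level and each coordinate $i$, one can independently extend the $i$-th branch while keeping the others fixed; a density argument over $M$ handles this, exactly paralleling the $n=2$ case but with bookkeeping over all proper subsets $a$. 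Everything else — the genericity of the product, continuity of the evaluations, and the absoluteness of ``not similar'' (a $\Sigma^1_2$ statement) between $M$ and $V$ — is routine and identical in spirit to the two-generics argument.
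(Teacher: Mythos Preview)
Your proposal is correct and follows exactly the approach the paper has in mind: the paper itself says only that the proof is ``essentially the same'' as Proposition~\ref{prop:mainpair} and leaves the details to the reader, and what you have written is precisely those details carried out in the natural way. You have also correctly isolated the one genuinely new point, namely the injectivity of the multi-variable evaluation maps $e^a_{\dot u_a}$, and your resolution via the forced clause $\dot u_a \notin \bigcup_{b\subsetneq a} V[G_i : i\in b]$ together with a density argument in the tree-of-conditions forcing is the right one.
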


\begin{lemma}
        Let $n\geq 2$ and $\mathcal{F}$ be a countable set of $C^1$ functions of the form $f \colon \prod_{a \subsetneq n} U_a \to \mathbb{R}$, for open sets $U_a \subseteq \mathbb{R}^k$, $k \geq 1$. Then there is a Borel function $g \colon \mathbb{R}^n \to \mathbb{R}$ not similar to any $f \in \mathcal{F}$.
\end{lemma}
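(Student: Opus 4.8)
The plan is to transcribe the proof of Lemma~\ref{lem:unsimilar} almost word for word, replacing the two-variable dimension count by the $n$-variable one. I would first fix a countable elementary submodel $M$ of a sufficiently large fragment of $V$ with $\mathcal{F}\in M$, and let $P\subseteq\mathbb{R}$ be a perfect set of mutually generic random reals over $M$, so that every tuple of finitely many distinct elements of $P$ avoids all Lebesgue null sets coded in $M$ (exactly the starting point of Lemma~\ref{lem:unsimilar}). I would then let $g\colon\mathbb{R}^n\to P$ be any Borel injection and claim that this $g$ works.

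To verify the claim, fix $f\colon\prod_{a\subsetneq n}U_a\to\mathbb{R}$ in $\mathcal{F}$ and suppose towards a contradiction that $g$ is similar to $f$, as witnessed by non-empty perfect sets $P_0,\dots,P_{n-1}\subseteq\mathbb{R}$ and continuous injections $\varphi_a\colon\prod_{i\in a}P_i\to U_a$. Choose $m\in\omega$ large enough that $k\cdot\bigl((m+1)^n-m^n\bigr)<m^n$; such an $m$ exists because this inequality is equivalent to $(1+1/m)^n<(k+1)/k$, whose left-hand side tends to $1$ as $m\to\infty$ while $(k+1)/k>1$. Observe that $(m+1)^n-m^n=\sum_{a\subsetneq n}m^{|a|}$ is exactly the number of pairs $(a,\sigma)$ with $a\subsetneq n$ and $\sigma\colon a\to m$.

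Next I would form the amalgamated map $F$: its input is a point $w_{a,\sigma}\in U_a$ for each such pair $(a,\sigma)$, and its output is the vector $\bigl\langle f\bigl(\langle w_{a,\tau\restriction a}:a\subsetneq n\rangle\bigr):\tau\colon n\to m\bigr\rangle\in\mathbb{R}^{m^n}$. Then $F$ is a $C^1$ map from an open subset of $\mathbb{R}^d$ with $d=k\cdot\bigl((m+1)^n-m^n\bigr)<m^n$ into $\mathbb{R}^{m^n}$, and $F\in M$ since it is defined from $f$ and $m$; so by Lemma~\ref{lem:Sard} its range is a null subset of $\mathbb{R}^{m^n}$ belonging to $M$. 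On the other hand, pick distinct points $x_{i,0},\dots,x_{i,m-1}\in P_i$ for each $i<n$ (possible as non-empty perfect sets are infinite) and set $w_{a,\sigma}:=\varphi_a(\langle x_{i,\sigma(i)}:i\in a\rangle)$. Then similarity of $g$ and $f$ gives $f(\langle w_{a,\tau\restriction a}:a\subsetneq n\rangle)=g(x_{0,\tau(0)},\dots,x_{n-1,\tau(n-1)})$ for every $\tau\colon n\to m$, so the vector $\bigl\langle g(x_{0,\tau(0)},\dots,x_{n-1,\tau(n-1)}):\tau\colon n\to m\bigr\rangle$ lies in the range of $F$. But its $m^n$ coordinates are pairwise distinct members of $P$ — distinct because $g$ is injective and the input tuples differ for distinct $\tau$ — hence this vector cannot lie in any null set coded in $M$, a contradiction.

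The only place asking for a little care, and the nearest thing to an obstacle, is the combinatorial bookkeeping: one must check that the number of independent $U_a$-inputs fed into $f$, one for each pair $(a,\sigma)$, really is $k\sum_{a\subsetneq n}m^{|a|}=k((m+1)^n-m^n)$, and that this is $o(m^n)$ so that Lemma~\ref{lem:Sard} applies for large enough $m$. Once this is set up, the measure argument together with elementarity of $M$ and mutual genericity of $P$ over $M$ goes through exactly as in the case $n=2$.
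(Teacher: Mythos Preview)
Your proof is correct and follows essentially the same approach as the paper: both arguments define the amalgamated $C^1$ map $F$ on $\prod_{a\subsetneq n}(U_a)^{m^{|a|}}$, choose $m$ so that the domain dimension $k\sum_{a\subsetneq n}m^{|a|}$ falls below $m^n$, and invoke Lemma~\ref{lem:Sard} together with mutual randomness of $P$ over $M$ to reach a contradiction. Your bookkeeping via the identity $\sum_{a\subsetneq n}m^{|a|}=(m+1)^n-m^n$ and the limit $(1+1/m)^n\to 1$ is a pleasant alternative to the paper's polynomial-degree observation, and your indexing by functions $\sigma\colon a\to m$ is arguably cleaner than the paper's.
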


\begin{proof}
    The argument is analogous to the proof of Lemma~\ref{lem:unsimilar}. Specifically, for $f$ as above, we let $m$ be large enough so that $\sum_{a \subsetneq n} k m^{\vert a \vert} < m^n$. Such $m$ exists since the highest exponent in the polynomial (in variable $m$) on the left-hand-side is $n-1$, while $m^n$ has exponent $n$. Then consider $F \colon \prod_{a \subsetneq n} (U_a)^{(m^{\vert a\vert})} \to \mathbb{R}^{(m^n)}$ given by $$F(\langle x_{a, e} : e \in {}^{\vert a \vert}m, a \subsetneq n \rangle)_h = f( \langle x_a,h \restriction a : a \subsetneq n \rangle),$$ where we think of elements of $\mathbb{R}^{(m^n)}$ as indexed by functions $h \colon n \to m$. The image of $F$ has measure zero in $\mathbb{R}^{(m^n)}$ and the rest of the argument is as in Lemma~\ref{lem:unsimilar}.
\end{proof} 

Theorem~\ref{thm:main} is obtained exactly as before.

\begin{remark}
    It is likely that a finer non-similarity argument can provide an explicit function $g$ so that $g(r_0, \dots, r_{n-1})$, for $r_i$ a new real in $V[G_i]$, is not in the real algebraic closure of the subfield generated by the intermediate models. Given the type of argument in Lemma~\ref{lem:multadd}, a natural contender would be $e^{r_0 \cdot \dots\cdot r_{n-1}}$.
\end{remark}

\bibliographystyle{plain}

\end{document}